\numberwithin{equation}{section}
\newtheorem{thm}{Theorem}[section]
\newtheorem{cor}[thm]{Corollary}
\newtheorem{prop}[thm]{Proposition}
\newtheorem{ex}{Example}
\theoremstyle{definition}
\theoremstyle{remark}
\begin{document}

\title{Necessary and sufficient conditions for copositive tensors}%
 \thanks{Email: songyisheng1@gmail.com (Song); Liqun.Qi@polyu.edu.hk (Qi).}
\thanks{This work was supported by the Hong Kong Research Grant Council (Grant No. PolyU 501808, 501909, 502510, 502111) and the first author was supported partly by the National Natural Science Foundation of P.R. China (Grant No. 11071279, 11171094, 11271112)  and by the Research Projects of Science and Technology Department of Henan Province(Grant No. 122300410414 ).}
 \maketitle
 \begin{center}{Yisheng Song$^{1,2}$ and Liqun Qi$^1$}\\\vskip 2mm
 1. Department of Applied Mathematics, The Hong Kong Polytechnic University, Hung Hom, Kowloon, Hong Kong\\\vskip 2mm
 2. College of Mathematics and Information Science,
Henan Normal University, XinXiang HeNan,  P.R. China, 453007.\end{center}
 %
\vskip 4mm
\begin{quote}{\bf Abstract.}\
In this paper, it is proved that a symmetric tensor is (strictly)
copositive if and only if each of its principal sub-tensors has no
(non-positive) negative $H^{++}$-eigenvalue. Necessary and
sufficient conditions for (strict) copositivity of a symmetric
tensor are also given in terms of $Z^{++}$-eigenvalues of the
principal sub-tensors of that tensor. This presents a method for
testing (strict) copositivity of a symmetric tensor by means of
lower dimensional tensors. Also an equivalent definition of strictly
copositive tensors is given on the entire space
$\mathbb{R}^n$.\vskip 2mm

{\bf Key Words and Phrases:} Copositive Tensors, Principal sub-tensor, $H^{++}$-eigenvalue, $Z^{++}$-eigenvalue.\vskip 2mm

{\bf 2010 AMS Subject Classification:} 15A18, 15A69, 90C20, 90C30
\end{quote}
\vskip 8mm
\pagestyle{fancy} \fancyhead{} \fancyhead[EC]{ Yisheng Song and Liqun Qi}
\fancyhead[EL,OR]{\thepage} \fancyhead[OC]{The necessary and sufficient conditions of copositive tensors} \fancyfoot{}

\section{\bf Introduction}\label{}
An $m$-order $n$-dimensional tensor $\mathcal{A}$ consists of $n^m$
entries in the real field $\mathbb{R}$:
$$\mathcal{A} = (a_{i_1\cdots i_m}),\ \ \ \ \  a_{i_1\cdots i_m} \in \mathbb{R},\ \  1 \leq i_1,i_2,\cdots,i_m\leq n.$$
For $x = (x_1, x_2,\cdots, x_n)^T\in \mathbb{R}^n$ (or $\mathbb{C}^n$),  $\mathcal{A}x^{m-1}$ is a vector in $\mathbb{R}^n$ (or $\mathbb{C}^n$) with its ith component defined by
\begin{equation}\label{eq:11}(\mathcal{A}x^{m-1})_i=\sum_{i_2,\cdots,i_m=1}^na_{ii_2\cdots i_m}x_{i_2}\cdots x_{i_m}.\end{equation}
Then $x^T(\mathcal{A}x^{m-1})$ is a homogeneous polynomial, denoted
as $\mathcal{A}x^m$, i.e.,
\begin{equation}\label{eq:12}\mathcal{A}x^m=x^T(\mathcal{A}x^{m-1})=\sum_{i_1,i_2,\cdots,i_m=1}^na_{i_1i_2\cdots
i_m}x_{i_1}x_{i_2}\cdots
x_{i_m}\index{$\mathcal{A}x^m$},\end{equation} where $x^T$ is the
transposition of $x$.  An $m-$order $n$-dimensional tensor
$\mathcal{A}$ is called {\em nonnegative} ({\em positive}) if
$a_{i_1i_2\cdots i_m}\geq 0$ ($a_{i_1i_2\cdots i_m}>0$) for all
$i_1,i_2,\cdots, i_m$.  An $m$-order $n$-dimensional tensor
$\mathcal{A}$ is said to be {\em symmetric} if its entries
$a_{i_1\cdots i_m}$ are invariant for any permutation of the
indices. It is obvious that each $m$-order $n$-dimensional symmetric
tensor $\mathcal{A}$ defines a homogeneous polynomial
$\mathcal{A}x^m$ of degree $m$ with $n$ variables and vice versa.

 As a natural extension of the counterparts for symmetric matrices,  the
concepts of eigenvalues and eigenvectors  were introduced by Qi
\cite{LQ1} for higher order symmetric tensors, and the existence of
the eigenvalues and eigenvectors and their practical applications in
determining positive definiteness of an even degree multivariate
form were also studied by Qi \cite{LQ1}.  Lim \cite{LL}
independently introduced this notion and proved the existence of the
maximum and minimum eigenvalues using a variational approach.  Qi
\cite{LQ1, LQ2} extended some nice properties of matrices to higher
order tensors. Qi \cite{LQ2, LQ3} defined $E$-eigenvalues and the
$E$-characteristic polynomial of a tensor, and proved that an
$E$-eigenvalue of a tensor is a root of the $E-$characteristic
polynomial. Subsequently, these topics attract attention of many
mathematicians from different disciplines. For diverse studies and
applications on these topics, see  Chang \cite{C09}, Chang, Pearson
and Zhang \cite{CPT1},
Chang, Pearson and Zhang \cite{CPT},  Hu, Huang and Qi \cite{HHQ}, Hu and Qi \cite{HQ}, Ni, Qi, Wang and Wang \cite{NQWW}, Ng, Qi and Zhou \cite{NQZ},  Song and Qi \cite{SQ13,SQ10}, Yang and Yang \cite{YY10,YY11}, Zhang \cite{TZ}, Zhang and  Qi \cite{ZQ}, Zhang, Qi and Xu \cite{ZQX} and references cited therein.\\

 For a vector $x\in \mathbb{R}^n$, $x\geq 0$ ($x> 0$) means  that $x_i\geq0$ ($x_i>0$), $i=1,2,\cdots,n$. A real symmetric
matrix $A$ is said to be (i) {\em copositive} if $x\geq 0$ implies
$x^TAx\geq0$; (ii) {\em strictly copositive} if $x\geq 0$ and
$x\neq0$ implies $x^TAx>0$. This concept  is one of the most
important concept in applied mathematics and graph theory, which was
introduced by Motzkin \cite{TSM} in 1952. In the literature, there
are extensive discussions on such matrices. For example, Haynsworth
and Hoffman \cite{HH} showed the Perron-Frobenius property of a
copositive matrix; Martin \cite{DM} studied the properties of
copositlve matrices by means of definiteness of quadratic forms
subject to homogeneous linear inequality constraints;
V${\ddot{a}}$li${\dot{a}}$ho \cite{HV} developed some finite
criteria for (strictly) copositive matrices by searching its
principal submatrices; Ping and  Yu \cite{PY} obtained necessary and
sufficient conditions for copositive matrices of order four; Kaplan
\cite{WK} presented necessary and sufficient conditions for a
symmetric matrix to be (strictly) copositive by using eigenvalues
and eigenvectors of the principal submatrices of that matrix.

\begin{thm}[Kaplan  \cite{WK}]  Let $A$ be a symmetric matrix. Then $A$ is (strictly) copositive
if and only if every principal submatrix $B$ of $A$ has no eigenvector $v > 0$ with
associated eigenvalue $(\lambda\leq0) \lambda<0$.
\end{thm}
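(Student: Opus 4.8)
The plan is to tie (strict) copositivity of $A$ to the behaviour of the quadratic form $q(x)=x^{T}Ax$ on the compact set $S=\{x\in\mathbb{R}^{n}:x\geq 0,\ \|x\|_{2}=1\}$, and to recover a positive eigenvector of a principal submatrix from a minimizer of $q$ over $S$.

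\emph{Necessity} I expect to be routine. Suppose some principal submatrix $B=A_{II}$ (indexed by $I\subseteq\{1,\dots,n\}$, with $I=\{1,\dots,n\}$ allowed) has an eigenvector $v>0$ with $Bv=\lambda v$ and $\lambda<0$ (resp.\ $\lambda\leq 0$). Extend $v$ to $\tilde v\in\mathbb{R}^{n}$ by putting zeros in the coordinates outside $I$, so that $\tilde v\geq 0$ and $\tilde v\neq 0$; a direct computation gives $\tilde v^{T}A\tilde v=v^{T}Bv=\lambda\|v\|_{2}^{2}<0$ (resp.\ $\leq 0$ with $\tilde v\neq 0$), contradicting (strict) copositivity of $A$.

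\emph{Sufficiency} is the substantive part. Assume $A$ is not copositive (resp.\ not strictly copositive). Using the homogeneity of $q$, this is equivalent to $m:=\min_{x\in S}q(x)<0$ (resp.\ $m\leq 0$), the minimum being attained at some $x^{\ast}\in S$ since $S$ is compact and $q$ continuous. Let $I=\{i:x^{\ast}_{i}>0\}$ be the support of $x^{\ast}$, and set $B=A_{II}$, $v=x^{\ast}|_{I}>0$. The key observation is that $v$ is a local minimizer of $y\mapsto y^{T}By$ over the unit sphere of $\mathbb{R}^{|I|}$: any $y$ on that sphere close to $v$ still has strictly positive entries, hence its zero-padding lies in $S$ and carries the value $y^{T}By$, which is therefore $\geq m=v^{T}Bv$. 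By the Lagrange multiplier rule on the sphere, $Bv=\lambda v$ for some scalar $\lambda$, and evaluating the Rayleigh quotient gives $\lambda=v^{T}Bv=(x^{\ast})^{T}Ax^{\ast}=m<0$ (resp.\ $\leq 0$). Thus $B$ has a positive eigenvector with a negative (resp.\ non-positive) eigenvalue, contradicting the hypothesis; hence $A$ is (strictly) copositive.

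The main obstacle is the step in the sufficiency proof that converts a constrained minimizer of $q$ on $S$ into a genuine eigenpair of a principal submatrix. Two points must be handled with care: the normalization is taken to be $\|x\|_{2}=1$ (rather than, say, $\mathbf{1}^{T}x=1$) precisely so that the first-order conditions restricted to the support collapse to the eigenvalue equation $Bv=\lambda v$; and one must verify that the non-negativity constraints are inactive at every coordinate of the support, so that no extra multipliers appear and $v$ is an \emph{interior} critical point of the form on the lower-dimensional sphere. The remaining ingredients — compactness of $S$, homogeneity of $q$, and the sign bookkeeping distinguishing the strict from the non-strict case — are straightforward.
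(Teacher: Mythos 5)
Your proof is correct, and it is essentially the argument this paper itself uses: the statement is only quoted here from Kaplan, but the paper's proofs of its tensor generalizations (Theorems \ref{th:32}--\ref{th:35}) proceed exactly as you do — necessity by zero-padding a positive eigenvector of a principal sub-block, sufficiency by minimizing the form over a compact slice of $\mathbb{R}^n_+$, restricting to the support of the minimizer so that the nonnegativity constraints are inactive, and invoking the Lagrange/KKT conditions on the sphere to produce the eigenpair with eigenvalue equal to the (negative, resp.\ non-positive) minimum value. Your explicit remark that a neighborhood of $v>0$ on the lower-dimensional sphere stays positive, so that $v$ is an interior critical point with no extra multipliers, is precisely the point the paper's proof relies on (and states somewhat more tersely).
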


One of the most important motivation for studying copositive
matrices is that a large class of mixed-binary quadratic programs
can be formulated as copositive programs \cite{B09} where a linear
function is minimized over a linearly constrained subset of the cone
of completely positive matrices, which is the dual cone of the
copositive matrix cone. More recently, this equivalence has been
extended to general nonconvex quadratically constrained quadratic
program whose feasible region is nonempty and bounded \cite{BD11}.
Also there is a nice survey on copositive matrices and their
applications \cite{B12}. It is interesting to see to what extent
these results can be extend to the tensor. Recently, Qi \cite{LQ5}
extended the concept of copositive matrices to tensors and found its
many nice properties as copositive matrices. Suppose that a tensor
$\mathcal{A}$ is a real symmetric tensor of order $m$ and dimension
$n$. $\mathcal{A}$ is said to be \begin{itemize}
\item[(i)] {\em copositive } if $\mathcal{A}x^m\geq0$ for all $x\in \mathbb{R}^n_+$;
\item[(ii)] {\em strictly copositive} if  $\mathcal{A}x^m>0$ for all $x\in \mathbb{R}^n_+\setminus\{0\}$.\end{itemize}

  A matrix is a $2$-order tensor, so
 it is very interesting to try and establish similar results for tensors as a parallel theory for matrices.  The concept of principal sub-tensors of a symmetric tensor
 was introduced by Qi \cite{LQ1} for studying positive semidefiniteness of that tensor when the order of that tensor is even. Now we study the (strict) copositivity of
 a symmetric tensor $\mathcal{A}$ with the aid of  the principal sub-tensors of $\mathcal{A}$.\\

In this paper,  we will give an equivalent definition of  (strict)
copositivity of a symmetric tensor   on the entire  space
$\mathbb{R}^n$.  It is showed that a symmetric tensor $\mathcal{A}$
is (strictly) copositive if and only if every principal sub-tensor
of $\mathcal{A}$ has no (non-positive) negative
$H^{++}$-eigenvalues, i.e., every principal  sub-tensor of
$\mathcal{A}$ has no eigenvector $v>0$ with associated
$H$-eigenvalue ($\lambda\leq0$) $\lambda<0$.  The same conclusions
still hold for using the $Z^{++}$-eigenvalue instead of
$H^{++}$-eigenvalue.  Applying these results, we can test the
copositivity of a symmetric tensor by means of lower dimensional
tensors.

 \section{\bf Preliminaries and Basic facts}

 In the sequel, we shall denote the transposition of a vector $x$ by $x^T$. Throughout this paper, let $\mathbb{R}^n_{+}=\{x\in \mathbb{R}^n;x\geq0\}$, $\mathbb{R}^n_{-}=\{x\in \mathbb{R}^n;x\leq0\}$, and $\mathbb{R}^n_{++}=\{x\in \mathbb{R}^n;x>0\}$, and $e=(1,1,\cdots,1)^T$. Denote by $e^{(i)}$ the
ith unit vector in $\mathbb{R}^n$, i.e., $e^{(i)}_j=1$ if $i=j$ and  $e^{(i)}_j=0$ if $i\neq j$, for $i,j\in\{1,2,\cdots,n\}$.\\

 Let $\mathcal{A}$ be an $m$-order $n$-dimensional tensor.
A number $\lambda\in \mathbb{C}$ is called an {\em eigenvalue of $\mathcal{A}$}, if it and a nonzero vector $x\in \mathbb{C}^n\setminus\{0\}$ are solutions of the following systems of equations:
\begin{equation}\label{eq:22}\mathcal{A}x^{m-1}=\lambda x^{[m-1]}, \end{equation}
where $x^{[m-1]}=(x_1^{m-1},\cdots , x_n^{m-1})^T$, and call $x$ an {\em eigenvector} of $\mathcal{A}$ associated with the eigenvalue $\lambda$. We call such an eigenvalue {\em $H$-eigenvalue} if it is real and has a real eigenvector $x$, and call such a real eigenvector $x$ an {\em H-eigenvector}.

These concepts  were first introduced by Qi \cite{LQ1} for higher
order symmetric tensors. Lim \cite{LL} independently introduced this
notion.
Qi \cite{LQ1, LQ2} extended some nice properties of symmetric matrices to higher order symmetric tensors. The Perron-Frobenius theorem of nonnegative matrices had been generalized to higher order nonnegative tensors under various conditions by Chang, Pearson and Zhang \cite{CPT1},  Hu, Huang and Qi \cite{HHQ}, Yang and Yang \cite{YY10,YY11}, Zhang \cite{TZ} and others.\\

For an $m$-order $n$-dimensional tensor $\mathcal{A}$, we say a number $\mu\in \mathbb{C}$ is an {\em $E$-eigenvalue of $\mathcal{A}$} and a nonzero
vector $x\in \mathbb{C}^n\setminus\{\theta\}$ is an {\em $E$-eigenvector} of $\mathcal{A}$ associated with the $E$-eigenvalue $\mu$, if they are solutions of the following systems of equations:
\begin{equation}\label{eq:23}\begin{cases}\mathcal{A}x^{m-1}=\mu x\\
 x^Tx=1. \end{cases}\end{equation}
 If $x$ is real, then $\mu$ is also real. In this case, $\mu$ and $x$ are called a {\em $Z$-eigenvalue} of  $\mathcal{A}$ and
a {\em Z-eigenvector} of  $\mathcal{A}$ associated with the Z-eigenvalue $\mu$, respectively.

These concepts about $E$-eigenvalue were first introduced  by Qi \cite{LQ1,LQ2} for studying the properties of a higher order tensor. Qi \cite{LQ3} defined the $E$-characteristic polynomial of a tensor $\mathcal{A}$, and showed that if $\mathcal{A}$ is regular, then  a complex number is an E-eigenvalue of $\mathcal{A}$ if and only if it is a root of the E-characteristic polynomial.\\

Recently, Qi \cite{LQ4} introduced and used the following concepts
for studying the properties of hypergraphs. An $H$-eigenvalue
$\lambda$ of $\mathcal{A}$ is said to be (i) an {\em
$H^+$-eigenvalue of $\mathcal{A}$}, if its $H$-eigenvector $x\in
\mathbb{R}^n_+$;(ii) an {\em  $H^{++}$-eigenvalue of $\mathcal{A}$},
if its $H$-eigenvector $x\in \mathbb{R}^n_{++}$.
    Similarly, we introduce the concepts of $Z^+$-eigenvalue and $Z^{++}$-eigenvalue.  An $Z$-eigenvalue $\mu$ of $\mathcal{A}$ is said to be (a) a {\em $Z^+$-eigenvalue of $\mathcal{A}$}, if its $Z$-eigenvector $x\in \mathbb{R}^n_+$; (b) a {\em  $Z^{++}$-eigenvalue of $\mathcal{A}$}, if its $Z$-eigenvector $x\in \mathbb{R}^n_{++}$.\\

\section{\bf Strictly Copositive  Tensors with respect to entire space $\mathbb{R}^n$}

Let $\|\cdot\|$ denote any norm on $\mathbb{R}^n$. For $x=(x_1,x_2,\cdots,x_n)^T$, let $$x^+=(x_1^+,x_2^+,\cdots,x_n^+)^T\mbox{ and }x^-=(x_1^-,x_2^-,\cdots,x_n^-)^T,$$ here $x_i^+=\max\{x_i,0\}$ and $x_i^-=\max\{-x_i,0\}$ for $i=1,2,\ldots,n$. Clearly, $x^+\geq0$, $x^-\geq0$, $|x_i|=x_i^++x_i^-$,  and $x=x^+-x^-$. Now we give the equivalent definition of  (strict) copositivity of a symmetric tensor in the sense of any norm on $\mathbb{R}^n$.

\begin{prop} \label{pr:41} Let $\mathcal{A}$ be a symmetric tensor of order $m$ and dimension $n$. Then we have
\begin{itemize}
\item[(i)] $\mathcal{A}$ is copositive if and only if  $\mathcal{A}x^m\geq0$ for all $x\in \mathbb{R}^n_+$ with $\|x\|=1$;
\item[(ii)] $\mathcal{A}$ is strictly copositive if and only if $\mathcal{A}x^m>0$ for all $x\in \mathbb{R}^n_+$ with $\|x\|=1$;
\item[(iii)] $\mathcal{A}$ is strictly copositive if and only if $\mathcal{A}$ is copositive and the fact that $\mathcal{A}x^m=0$ for $x\in \mathbb{R}^n_+$  implies  $x=0$.
\end{itemize}
 \end{prop}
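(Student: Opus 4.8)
The plan is to reduce all three parts to a single elementary fact: the form $x \mapsto \mathcal{A}x^m$ is positively homogeneous of degree $m$, i.e. $\mathcal{A}(tx)^m = t^m\,\mathcal{A}x^m$ for all $t \ge 0$ and $x \in \mathbb{R}^n$, which is read off directly from the multilinear expansion in \eqref{eq:21}. Two auxiliary remarks are used throughout: since $m \ge 1$ we have $t^m > 0$ for $t > 0$, and since $\|\cdot\|$ is a norm we have $\|x\| > 0$ for $x \ne 0$, so that the normalization $x \mapsto x/\|x\|$ is always legitimate on $\mathbb{R}^n \setminus \{0\}$ and maps $\mathbb{R}^n_+ \setminus \{0\}$ into $\{x \in \mathbb{R}^n_+ : \|x\| = 1\}$.

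For (i) and (ii), the forward implications are immediate because $\{x \in \mathbb{R}^n_+ : \|x\| = 1\} \subseteq \mathbb{R}^n_+$. For the converses, I would take an arbitrary $x \in \mathbb{R}^n_+$, handle $x = 0$ separately (trivially $\mathcal{A}0^m = 0$, which settles (i)), and for $x \ne 0$ apply the hypothesis to $x/\|x\|$ and multiply back by $\|x\|^m > 0$ via homogeneity to recover the sign of $\mathcal{A}x^m$. Part (ii) is verbatim the same argument with $\ge$ replaced by $>$, noting that $0 \notin \mathbb{R}^n_+ \setminus \{0\}$ so no separate case is needed there.

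For (iii), both directions are essentially tautological once copositivity is in hand. If $\mathcal{A}$ is strictly copositive, then $\mathcal{A}x^m > 0 \ge 0$ on $\mathbb{R}^n_+ \setminus \{0\}$ gives copositivity, and $\mathcal{A}x^m = 0$ with $x \in \mathbb{R}^n_+$ forces $x \notin \mathbb{R}^n_+ \setminus \{0\}$, i.e. $x = 0$. Conversely, assuming $\mathcal{A}$ copositive and that $\mathcal{A}x^m = 0$, $x \in \mathbb{R}^n_+$, implies $x = 0$: for $x \in \mathbb{R}^n_+ \setminus \{0\}$ we have $\mathcal{A}x^m \ge 0$ by copositivity, and $\mathcal{A}x^m = 0$ is impossible since it would force $x = 0$; hence $\mathcal{A}x^m > 0$, so $\mathcal{A}$ is strictly copositive.

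I do not expect a genuine obstacle: no compactness or continuity argument on the sphere is needed, precisely because homogeneity transfers sign information freely between the unit sphere $\{x \in \mathbb{R}^n_+ : \|x\| = 1\}$ and the full cone $\mathbb{R}^n_+$. The only points requiring a moment's care are the legitimacy of normalizing (which is why the assumption that $\|\cdot\|$ is a norm, not merely a seminorm, is used) and keeping track of the trivial vector $x = 0$, which must be treated in part (i) but is automatically excluded from $\mathbb{R}^n_+ \setminus \{0\}$ in parts (ii) and (iii).
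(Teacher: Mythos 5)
Your proposal is correct and follows essentially the same route as the paper: the forward directions of (i) and (ii) are immediate by restriction, the converses normalize $y=x/\|x\|$ and use $\mathcal{A}x^m=\|x\|^m\mathcal{A}y^m$ with the zero vector handled separately in (i), and (iii) is the same tautological bookkeeping. No gaps.
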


 \begin{proof}
 (i) When $\mathcal{A}$ is copositive, the conclusion is obvious. Conversely, take $x\in \mathbb{R}^n_+$. If $\|x\|=0$, then it follows that $x=0$, and hence $\mathcal{A}x^m=0$. If $\|x\|>0$, then let $y=\frac{x}{\|x\|}$. We have $\|y\|=1$ and $x=\|x\|y$, and so
 $$\mathcal{A}x^m=\mathcal{A}(\|x\|y)^m=\|x\|^m\mathcal{A}y^m\geq0.$$
 Therefore, $\mathcal{A}x^m\geq0$ for all $x\in \mathbb{R}^n_+$, as required.

 Similarly, (ii) is easily proved.

 (iii) Let $\mathcal{A}$ be strictly copositive. Clearly, $\mathcal{A}$ is copositive. Suppose there exists $x_0\in \mathbb{R}^n_+$ and $x_0\neq0$ such that $\mathcal{A}x_0^m=0$, which contradicts the strict copositivity of  $\mathcal{A}$.  Conversely, if $x\neq0$ and $x\in \mathbb{R}^n_+$, then $\mathcal{A}x^m\neq0$. Since $\mathcal{A}$ is copositive, $\mathcal{A}x^m>0$. The conclusion follows.
 \end{proof}

Next  we present the necessary and sufficient conditions of strict copositivity of a symmetric tensor on entire space  $\mathbb{R}^n$.

 \begin{thm} \label{th:42} Let $\mathcal{A}$ be a symmetric tensor of order $m$ and dimension $n$. Then $\mathcal{A}$ is strictly copositive if and only if there is a real number $\gamma\geq0$ such that  \begin{equation} \label{eq:41}\mathcal{A}x^m+\gamma\|x^-\|^m>0, \mbox{ for all }x\in \mathbb{R}^n\setminus\{0\}.\end{equation}
  \end{thm}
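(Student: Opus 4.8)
The plan is to prove both implications by splitting an arbitrary $x \in \mathbb{R}^n$ into its positive and negative parts $x = x^+ - x^-$ and exploiting the homogeneity of $\mathcal{A}x^m$ together with Proposition~\ref{pr:41}. For the ``if'' direction, suppose \eqref{eq:41} holds for some $\gamma \geq 0$. Given any $x \in \mathbb{R}^n_+ \setminus \{0\}$, we have $x^- = 0$, so \eqref{eq:41} reduces immediately to $\mathcal{A}x^m > 0$; hence $\mathcal{A}$ is strictly copositive by definition. This direction is essentially free.

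The substance is the ``only if'' direction. Assume $\mathcal{A}$ is strictly copositive. I would first reduce, via Proposition~\ref{pr:41}(ii) applied to the normalized vectors, to producing a $\gamma$ that works on the compact set $S = \{x \in \mathbb{R}^n : \|x\| = 1\}$; once $\mathcal{A}x^m + \gamma\|x^-\|^m > 0$ holds on $S$, homogeneity of degree $m$ (both $\mathcal{A}x^m$ and $\|x^-\|^m$ scale by $\|x\|^m$ under $x \mapsto \|x\|x$) extends it to all of $\mathbb{R}^n \setminus \{0\}$. On $S$, partition the unit sphere: on the relatively open piece where $\mathcal{A}x^m > 0$ any $\gamma \geq 0$ suffices, so the issue is the closed set $K = \{x \in S : \mathcal{A}x^m \leq 0\}$. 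On $K$ we must have $x^- \neq 0$, because if $x^- = 0$ then $x = x^+ \in \mathbb{R}^n_+$ with $\|x\| = 1 \neq 0$, and strict copositivity forces $\mathcal{A}x^m > 0$, contradicting $x \in K$. Therefore $\|x^-\| > 0$ on $K$, and since $K$ is compact and $x \mapsto \|x^-\|$ is continuous, $\min_{x \in K} \|x^-\|^m =: \delta > 0$. Likewise $\mathcal{A}x^m$ is continuous on the compact set $K$, so it attains a finite minimum; set $\mu := \min_{x \in K} \mathcal{A}x^m$, a finite (possibly negative) number. Now choose $\gamma$ large enough that $\mu + \gamma\delta > 0$, e.g. $\gamma = \max\{0, (1 - \mu)/\delta\}$; then for every $x \in K$ we get $\mathcal{A}x^m + \gamma\|x^-\|^m \geq \mu + \gamma\delta > 0$, and for $x \in S \setminus K$ the inequality is automatic since both terms are nonnegative and the first is strictly positive. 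This establishes \eqref{eq:41} on $S$, hence everywhere on $\mathbb{R}^n \setminus \{0\}$ by homogeneity.

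The main obstacle is confirming that $\|x^-\|$ is bounded away from zero on $K$; this rests precisely on the characterization of strict copositivity (that $\mathcal{A}x^m = 0$ with $x \in \mathbb{R}^n_+$ forces $x = 0$, Proposition~\ref{pr:41}(iii)) ruling out points of $K$ lying in the nonnegative orthant, combined with compactness to upgrade the pointwise positivity to a uniform lower bound $\delta > 0$. The remaining steps — continuity of $x \mapsto \mathcal{A}x^m$ and $x \mapsto \|x^-\|$, finiteness of the minimum $\mu$, and the degree-$m$ homogeneity argument transferring the estimate from the unit sphere to all of $\mathbb{R}^n$ — are routine. One small point to be careful about: $\mathcal{A}x^m$ could a priori be arbitrarily negative, but on the compact set $K$ it is bounded below, which is all that is needed to pick a finite $\gamma$.
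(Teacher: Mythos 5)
Your proof is correct, but it takes a genuinely different route from the paper's. The paper argues the ``only if'' direction by contradiction: it assumes no $\gamma$ works, takes $\gamma=k$ for each positive integer $k$ to get vectors $x^{(k)}$ violating \eqref{eq:41}, normalizes them to the unit sphere, extracts a convergent subsequence $y^{(k)}\to y$, and shows from $\tfrac{1}{k}\mathcal{A}(y^{(k)})^m+\|(y^{(k)})^-\|^m\leq 0$ that $y^-=0$, so that strict copositivity forces $\mathcal{A}y^m>0$ and contradicts $\mathcal{A}(y^{(k)})^m\leq 0$ for large $k$. You instead give a direct construction: restrict to the unit sphere by homogeneity, isolate the compact ``bad set'' $K=\{x\in S:\mathcal{A}x^m\leq 0\}$, use strict copositivity to show $K$ misses the nonnegative orthant so that $\delta=\min_{K}\|x^-\|^m>0$, and then exhibit $\gamma$ explicitly in terms of $\delta$ and $\mu=\min_K\mathcal{A}x^m$. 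Both arguments hinge on the same two ingredients (compactness of the sphere and the fact that strict copositivity rules out $x^-=0$ when $\mathcal{A}x^m\leq 0$), but yours is constructive and produces a concrete $\gamma$, and it also delivers the sphere-restricted statement of Theorem \ref{th:45} as an intermediate step, whereas the paper's sequential-compactness argument is nonconstructive. The only detail you should add is the degenerate case $K=\emptyset$ (your formula for $\gamma$ divides by $\delta$, which is then undefined); there $\gamma=0$ works trivially, so this is cosmetic rather than a gap.
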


 \begin{proof}
 Let $\mathcal{A}$ be strictly copositive. Suppose that there is no $\gamma\geq0$ such that the inequality (\ref{eq:41}) holds, i.e., for any real number $\gamma\geq0$, there exists an $x^{(\gamma)}\in\mathbb{R}^n\setminus\{0\}$ such that $$\mathcal{A}(x^{(\gamma)})^m+\gamma\|(x^{(\gamma)})^-\|^m\leq0.$$
 In particular, for any positive integer $k$ (taking $\gamma=k$), there exists $x^{(k)}\in\mathbb{R}^n\setminus\{0\}$ such that    $$\mathcal{A}(x^{(k)})^m+k\|(x^{(k)})^-\|^m\leq0.$$
 Clearly, $\|x^{(k)}\|>0$. Let $y^{(k)}=\frac{x^{(k)}}{\|x^{(k)}\|}.$ Then we have
 \begin{equation} \label{eq:43}\mathcal{A}(y^{(k)})^m+k\|(y^{(k)})^-\|^m\leq0\mbox{ for all positive integer } k,\end{equation}
 and hence, \begin{equation} \label{eq:44}\frac{\mathcal{A}(y^{(k)})^m}k+\|(y^{(k)})^-\|^m\leq0 \mbox{ for all positive integer } k.\end{equation}
  Since $\|y^{(k)}\|=1$ for all positive integer $k$, we may assume that the sequence $\{y^{(k)}\}$ strongly converges to some vector $y\in \mathbb{R}^n\setminus\{0\}$ with $\|y\|=1$ (extracting a subsequence if necessary).
 Let $k\to\infty$ in (\ref{eq:44}). Then we have $\|y^-\|=0$, and so $y\in \mathbb{R}^n_+\setminus\{0\}$. It follows from the strict copositivity of $\mathcal{A}$ that $$\mathcal{A}y^m>0.$$
 Since $\lim\limits_{k\to\infty}\mathcal{A}(y^{(k)})^m=\mathcal{A}y^m$, there exists a positive integer $N$ such that $$\mathcal{A}(y^{(k)})^m>0\mbox{ for all }k>N.$$
 This yields a contradiction of (\ref{eq:43}). Thus there is $\gamma\geq0$ such that the inequality (\ref{eq:41}) holds.

  Conversely, take $x\in \mathbb{R}^n_+\setminus\{0\}$. Obviously, $\|x^-\|=0$. Therefore, it follows from the inequality (\ref{eq:41}) that
 $$\mathcal{A}x^m>0\mbox{ for all }x\in \mathbb{R}^n_+\setminus\{0\}.$$
 Therefore, $\mathcal{A}$ is strictly copositive, as required.
 \end{proof}

 When $\mathcal{A}$ is a symmetric tensor of even order,  $x^-$ may be replaced by $x^+$ in Theorem \ref{th:42}.

\begin{thm} \label{th:43} Let $\mathcal{A}$ be a symmetric tensor of order $m$ and dimension $n$.   If  $m$ is an even number,  then $\mathcal{A}$ is strictly copositive if and only if there is a real number $\gamma\geq0$ such that \begin{equation} \label{eq:42}\mathcal{A}x^m+\gamma\|x^+\|^m>0, \mbox{ for all }x\in \mathbb{R}^n\setminus\{0\}.\end{equation}
 \end{thm}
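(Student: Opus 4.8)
The plan is to deduce this immediately from Theorem \ref{th:42} by exploiting the fact that, for even $m$, the value $\mathcal{A}x^m$ is left unchanged by the sign reversal $x\mapsto -x$. Concretely, I would first note that $x\mapsto -x$ is a bijection of $\mathbb{R}^n\setminus\{0\}$ onto itself, so a strict inequality asserted for all $x\in\mathbb{R}^n\setminus\{0\}$ is equivalent to the same inequality with $x$ replaced by $-x$ throughout.

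Next I would record the two elementary identities that make the substitution go through. First, componentwise $(-x)_i^-=\max\{-(-x_i),0\}=\max\{x_i,0\}=x_i^+$, hence $(-x)^-=x^+$ and therefore $\|(-x)^-\|=\|x^+\|$. Second, since $\mathcal{A}(ty)^m=t^m\mathcal{A}y^m$ for every scalar $t$ and $m$ is even, $\mathcal{A}(-x)^m=(-1)^m\mathcal{A}x^m=\mathcal{A}x^m$. It is exactly the evenness of $m$ that is used at this step.

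With these in hand the argument is a one-line translation. By Theorem \ref{th:42}, $\mathcal{A}$ is strictly copositive if and only if there is $\gamma\geq0$ with $\mathcal{A}y^m+\gamma\|y^-\|^m>0$ for all $y\in\mathbb{R}^n\setminus\{0\}$. Applying this with $y=-x$ and invoking the two identities above, that statement becomes precisely $\mathcal{A}x^m+\gamma\|x^+\|^m>0$ for all $x\in\mathbb{R}^n\setminus\{0\}$, which is \eqref{eq:42}; since $x\mapsto -x$ is reversible, the equivalence runs in both directions, proving the claim.

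I do not anticipate a genuine obstacle here: the only point needing care is the identity $(-x)^-=x^+$ together with the observation that evenness of $m$ is exactly what keeps $\mathcal{A}x^m$ invariant under $x\mapsto -x$. For completeness one could instead reprove the statement from scratch by repeating verbatim the compactness argument of Theorem \ref{th:42} — normalizing a hypothetical sequence $x^{(k)}$ with $\mathcal{A}(x^{(k)})^m+k\|(x^{(k)})^+\|^m\le 0$, passing to a convergent subsequence $y^{(k)}\to y$ with $\|y\|=1$, concluding $\|y^+\|=0$ so that $y\in\mathbb{R}^n_-\setminus\{0\}$, hence $-y\in\mathbb{R}^n_+\setminus\{0\}$ and $\mathcal{A}y^m=\mathcal{A}(-y)^m>0$ by strict copositivity, contradicting $\mathcal{A}(y^{(k)})^m\le 0$ — but the substitution argument is shorter and makes the role of the even order transparent.
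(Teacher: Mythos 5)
Your proof is correct, and it takes a shorter route than the paper. The paper proves Theorem \ref{th:43} from scratch by rerunning the entire contradiction-and-compactness argument of Theorem \ref{th:42}: it assumes that for every $k$ there is $x^{(k)}\neq 0$ with $\mathcal{A}(x^{(k)})^m+k\|(x^{(k)})^+\|^m\le 0$, normalizes, extracts a convergent subsequence with limit $y$, concludes $\|y^+\|=0$ so $-y\in\mathbb{R}^n_+\setminus\{0\}$, and derives a contradiction from $\mathcal{A}y^m=\mathcal{A}(-y)^m>0$; the converse is handled by the same sign flip. You instead observe that for even $m$ the two conditions \eqref{eq:41} and \eqref{eq:42} are literally interchanged by the bijection $x\mapsto -x$ of $\mathbb{R}^n\setminus\{0\}$, using the identities $(-x)^-=x^+$ (hence $\|(-x)^-\|=\|x^+\|$ for any norm) and $\mathcal{A}(-x)^m=\mathcal{A}x^m$, so that Theorem \ref{th:43} is an immediate corollary of Theorem \ref{th:42} with the same $\gamma$. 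Both arguments hinge on exactly the same use of evenness, but your reduction avoids duplicating the limiting argument and makes transparent that the only new input beyond Theorem \ref{th:42} is the invariance of $\mathcal{A}x^m$ under sign reversal; the paper's self-contained re-derivation buys nothing extra here. Your fallback sketch at the end coincides with the paper's actual proof.
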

\begin{proof}
 Let $\mathcal{A}$ be strictly copositive. Since $m$ is an even number,  $$\mathcal{A}(-x)^m=\mathcal{A}x^m.$$ Suppose that for any positive integer $k$, there exists $x^{(k)}\in\mathbb{R}^n\setminus\{0\}$ such that $$\mathcal{A}(x^{(k)})^m+k\|(x^{(k)})^+\|^m\leq0.$$
Take $y^{(k)}=\frac{x^{(k)}}{\|x^{(k)}\|}.$ Then we have
  \begin{equation} \label{eq:45}\frac{\mathcal{A}(y^{(k)})^m}k+\|(y^{(k)})^+\|^m\leq0 \mbox{ for all positive integer } k.\end{equation}
 Without loss of generality, we may assume that the sequence $\{y^{(k)}\}$ strongly converges to some vector $y\in \mathbb{R}^n\setminus\{0\}$ with $\|y\|=1$.
 Let $k\to\infty$ in (\ref{eq:45}). Then we have $\|y^+\|=0$, and so, $y\in \mathbb{R}^n_-\setminus\{0\}$ and $-y\in \mathbb{R}^n_+\setminus\{0\}$.  It follows from the strict copositivity of $\mathcal{A}$ that $$\mathcal{A}y^m=\mathcal{A}(-y)^m>0$$
 Since $\lim\limits_{k\to\infty}\mathcal{A}(y^{(k)})^m=\mathcal{A}y^m$, there exists a positive integer $N$ such that $$\mathcal{A}(y^{(k)})^m>0\mbox{ for all }k>N.$$
 This yields a contradiction of (\ref{eq:45}). Thus there is $\gamma\geq0$ such that the inequality (\ref{eq:42}) holds.

  Conversely, take $x\in \mathbb{R}^n_+\setminus\{0\}$. Obviously, $-x\in \mathbb{R}^n_-\setminus\{0\}$ and $\|(-x)^+\|=0$. Therefore, it follows from the inequality (\ref{eq:42}) that
 $$\mathcal{A}x^m=\mathcal{A}(-x)^m+\gamma\|(-x)^+\|^m>0\mbox{ for all }x\in \mathbb{R}^n_+\setminus\{0\}.$$
 Therefore, $\mathcal{A}$ is strictly copositive, as required.
 \end{proof}


 Using similar proof of Proposition \ref{pr:41}, we also easily prove the following conclusions.

 \begin{thm} \label{th:45} Let $\mathcal{A}$ be a symmetric tensor of order $m$ and dimension $n$. Then $\mathcal{A}$ is strictly copositive if and only if there is a real number $\gamma\geq0$ such that  \begin{equation} \label{eq:46}\mathcal{A}x^m+\gamma\|x^-\|^m>0, \mbox{ for all }x\in \mathbb{R}^n\mbox{ with }\|x\|=1.\end{equation}
 \end{thm}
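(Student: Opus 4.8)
The plan is to reduce Theorem~\ref{th:45} to Theorem~\ref{th:42}, since the only difference between \eqref{eq:46} and \eqref{eq:41} is that the quantifier ranges over the unit sphere $\{x\in\mathbb{R}^n:\|x\|=1\}$ rather than over all of $\mathbb{R}^n\setminus\{0\}$. One direction is trivial: if \eqref{eq:41} holds for all $x\in\mathbb{R}^n\setminus\{0\}$, then in particular it holds for all $x$ with $\|x\|=1$, so \eqref{eq:46} follows with the same $\gamma$.

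For the converse, suppose \eqref{eq:46} holds with some $\gamma\geq 0$. I would take an arbitrary $x\in\mathbb{R}^n\setminus\{0\}$, set $t=\|x\|>0$ and $y=x/\|x\|$, so that $\|y\|=1$ and $x=ty$. The key observation is the positive homogeneity of degree $m$ of both terms: $\mathcal{A}x^m=\mathcal{A}(ty)^m=t^m\,\mathcal{A}y^m$, and since $(tx)^-=t\,x^-$ for $t>0$ (because $(tx_i)^-=\max\{-tx_i,0\}=t\max\{-x_i,0\}=t\,x_i^-$), we get $\|x^-\|=\|ty^-\|=t\|y^-\|$, hence $\gamma\|x^-\|^m=t^m\gamma\|y^-\|^m$. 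Therefore
\[
\mathcal{A}x^m+\gamma\|x^-\|^m=t^m\bigl(\mathcal{A}y^m+\gamma\|y^-\|^m\bigr)>0,
\]
where the strict inequality is \eqref{eq:46} applied to $y$, multiplied by the positive scalar $t^m$. Thus \eqref{eq:41} holds for all $x\in\mathbb{R}^n\setminus\{0\}$, and Theorem~\ref{th:42} then gives that $\mathcal{A}$ is strictly copositive. Conversely, if $\mathcal{A}$ is strictly copositive, Theorem~\ref{th:42} furnishes a $\gamma\geq0$ with \eqref{eq:41}, which in particular yields \eqref{eq:46}.

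There is essentially no obstacle here; the statement is a routine rescaling argument, exactly parallel to the proof of Proposition~\ref{pr:41}(i)--(ii), and the author even signals this by writing ``Using similar proof of Proposition~\ref{pr:41}.'' The only point that needs a line of care is the identity $(tx)^-=t\,x^-$ for $t>0$, which makes $\|x^-\|^m$ homogeneous of degree $m$ and lets the factor $t^m$ be pulled out cleanly; everything else is immediate from the degree-$m$ homogeneity of $\mathcal{A}x^m$ and an appeal to Theorem~\ref{th:42}.
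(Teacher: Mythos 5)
Your proof is correct and follows essentially the route the paper intends: the paper gives no explicit proof, saying only that it follows ``using similar proof of Proposition~\ref{pr:41},'' which is exactly your rescaling argument, and combining it with Theorem~\ref{th:42} as you do is the natural way to finish. The one point genuinely requiring care --- the degree-one positive homogeneity $(tx)^-=t\,x^-$ for $t>0$, which makes $\gamma\|x^-\|^m$ scale by $t^m$ --- is handled correctly.
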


  \begin{thm} \label{th:46} Let $\mathcal{A}$ be a symmetric tensor of order $m$ and dimension $n$. If  $m$ is an even number,  then $\mathcal{A}$ is strictly copositive if and only if there is a real number $\gamma\geq0$ such that  \begin{equation} \label{eq:47}\mathcal{A}x^m+\gamma\|x^+\|^m>0, \mbox{ for all }x\in \mathbb{R}^n\mbox{ with }\|x\|=1.\end{equation}
 \end{thm}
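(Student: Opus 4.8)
The plan is to deduce this from Theorem~\ref{th:43} by exactly the normalization device used in Proposition~\ref{pr:41}: passing between ``for all $x\in\mathbb{R}^n\setminus\{0\}$'' and ``for all $x\in\mathbb{R}^n$ with $\|x\|=1$'' costs nothing, because the functions involved are positively homogeneous of degree $m$. So I would not repeat the compactness argument of Theorem~\ref{th:43}, but rather quote it.

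For the ``only if'' direction I would argue directly: if $\mathcal{A}$ is strictly copositive, then since $m$ is even, Theorem~\ref{th:43} already provides a real number $\gamma\geq0$ with $\mathcal{A}x^m+\gamma\|x^+\|^m>0$ for every $x\in\mathbb{R}^n\setminus\{0\}$, and in particular for every $x$ with $\|x\|=1$; the same $\gamma$ witnesses \eqref{eq:47}.

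For the ``if'' direction, suppose $\gamma\geq0$ satisfies \eqref{eq:47}. The key observation is that for any scalar $t>0$ one has $(tx)^+=t\,x^+$, hence $\|(tx)^+\|^m=t^m\|x^+\|^m$, while $\mathcal{A}(tx)^m=t^m\mathcal{A}x^m$. Therefore, given an arbitrary nonzero $x\in\mathbb{R}^n$, set $y=x/\|x\|$; then $\|y\|=1$, so $\mathcal{A}y^m+\gamma\|y^+\|^m>0$, and multiplying through by $\|x\|^m>0$ yields $\mathcal{A}x^m+\gamma\|x^+\|^m>0$ for all $x\in\mathbb{R}^n\setminus\{0\}$. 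This is precisely inequality~\eqref{eq:42}, so Theorem~\ref{th:43} gives that $\mathcal{A}$ is strictly copositive. (Alternatively, one can avoid invoking Theorem~\ref{th:43} in this direction: for $x\in\mathbb{R}^n_+\setminus\{0\}$ put $z=-x/\|x\|$, so that $z\leq0$, $\|z\|=1$, and $\|z^+\|=0$; then \eqref{eq:47} gives $\mathcal{A}z^m>0$, and since $m$ is even $\mathcal{A}x^m=\|x\|^m\mathcal{A}(-z)^m=\|x\|^m\mathcal{A}z^m>0$.)

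There is essentially no obstacle here; the only point deserving a line of care is the positive homogeneity $(tx)^+=t\,x^+$ for $t>0$, which is what makes the normalization reversible and lets one transfer the inequality freely between the punctured space $\mathbb{R}^n\setminus\{0\}$ and the unit sphere. Evenness of $m$ enters only through its role in Theorem~\ref{th:43} (or, in the alternative argument, through the replacement of $z$ by $-z$).
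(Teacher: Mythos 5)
Your proposal is correct and is exactly the argument the paper intends: Theorem~\ref{th:46} is stated with only the remark that it follows ``using similar proof of Proposition~\ref{pr:41},'' i.e.\ by the degree-$m$ positive homogeneity of $x\mapsto\mathcal{A}x^m$ and of $x\mapsto\|x^+\|^m$ (via $(tx)^+=tx^+$ for $t>0$), which transfers inequality~\eqref{eq:42} back and forth between $\mathbb{R}^n\setminus\{0\}$ and the unit sphere so that Theorem~\ref{th:43} can be quoted. Your alternative self-contained argument for the ``if'' direction is also sound.
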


\section{\bf Principal Sub-tensors of Copositive  Tensors}

In homogeneous
polynomial $\mathcal{A}x^m$ defined by (\ref{eq:12}), if we let some (but not all) $x_i$ be zero, then we have a less variable homogeneous polynomial, which defines a lower dimensional tensor. We call such a lower dimensional tensor a {\em principal sub-tensor} of $\mathcal{A}$, i.e.,  an $m$-order $r$-dimensional principal sub-tensor $\mathcal{B}$  of an $m$-order $n$-dimensional tensor $\mathcal{A}$ consists of $r^m$ elements in $\mathcal{A} = (a_{i_1\cdots i_m})$: for any set $\mathcal{N}$ that composed of $r$ elements in $ \{1,2,\cdots , n\}$,
$$\mathcal{B} = (a_{i_1\cdots i_m}),\mbox{ for all } i_1, i_2, \cdots, i_m\in \mathcal{N}.$$ The concept were first introduced and used by Qi \cite{LQ1} for  the higher order symmetric tensor. Now we will continue to study the properties of the (strictly) copositive tensors by means of the principal sub-tensor of higher order symmetric tensor.

 \begin{thm} \label{th:32} Let $\mathcal{A}$ be a symmetric tensor of order $m$ and dimension $n$. Then $\mathcal{A}$ is copositive if and only if every principal sub-tensor of $\mathcal{A}$ has no  negative $H^{++}$-eigenvalue, i.e., every principal  sub-tensor of $\mathcal{A}$ has no eigenvector $v>0$ with
associated $H$-eigenvalue $\lambda<0$.
 \end{thm}

\begin{proof} Let $\mathcal{A}x^m\geq0$ for all $x\geq0$. Suppose there exists an $m$-order $l$-dimensional principal sub-tensor $\mathcal{B}$ of $\mathcal{A}$ with an $H^{++}$-eigenvalue $\lambda<0$ $(1\leq l\leq n)$, i.e. there is a positive vector $v\in\mathbb{R}^l_{++}$ such that $$\mathcal{B}v^{m-1}=\lambda v^{[m-1]}. $$
Without loss of generality, we may write
$v=(v_1,v_2,\cdots,v_l)^T$ $(v_i>0$ for $i=1,\cdots,l)$, $x_0=(v_1,v_2,\cdots,v_l,0,\cdots,0)^T$. Then by the definition of principal sub-tensor, we have $$\mathcal{A}x_0^m=\mathcal{B}v^m=\lambda \sum_{i=1}^lv_i^m<0.$$ This contradicts the hypothesis that $\mathcal{A}$ is copositive. So, no principal sub-tensor of $\mathcal{A}$ can have a negative $H^{++}$-eigenvalue.

Conversely, let the principal sub-tensor of $\mathcal{A}$ have the property said in the theorem. Suppose that $\mathcal{A}$ is not copositive, then at least there exists a $y_0\geq0$ such that $\mathcal{A}y_0^m<0$. Clearly, $y_0\neq0$.
Since the function $f(x)=Ax^m$ is continuous and the set $K=\{x \in \mathbb{R}^n_+;\sum\limits_{i=1}^nx_i^m = 1\}$ is a
compact subset of $\mathbb{R}^n$, there is $y\in K$ such that
\begin{equation} \label{eq:31}f(y)=\mathcal{A}y^m=\min_{x\in K} \mathcal{A}x^m.
\end{equation}
Obviously,   $f(y)=\mathcal{A}y^m\leq\mathcal{A}y_0^m<0$, and so some, but not all, components of $y$ may be $0$.
Without loss of generality, we may assume that $y_i>0$ for $i=1,2,\cdots,l$ $(1\leq l\leq n)$ and $y=(y_1,y_2,\cdots,y_l,0,\cdots,0)^T$,  and write $w=(y_1,y_2,\cdots,y_l)^T>0$. Let $\mathcal{B}$ be a principal sub-tensor that obtained from $\mathcal{A}$ by the polynomial $\mathcal{A}x^m$ for $x=(x_1,x_2,\cdots,x_l,0,\cdots,0)^T$. Then \begin{equation} \label{eq:32}
w\in\mathbb{R}^l_{++},\  \sum\limits_{i=1}^ly_i^m = 1\mbox{ and }f(y)=\mathcal{A}y^m=\mathcal{B}w^m<0.
\end{equation}

Let $x=(z_1,z_2,\cdots,z_l,0,\cdots,0)^T\in\mathbb{R}^n$ for all $z=(z_1,z_2,\cdots,z_l)^T\in\mathbb{R}^l$ with $\sum\limits_{i=1}^lz_i^m = 1$. Clearly, $x\in K$, and hence, by (\ref{eq:31}) and (\ref{eq:32}), we have $$f(x)=\mathcal{A}x^m=\mathcal{B}z^m\geq f(y)=\mathcal{A}y^m=\mathcal{B}w^m.$$ Since $w\in\mathbb{R}^l_{++}$,
$w$ is a local minimizer of the following optimization problem
$$ \begin{aligned}
    \min_{z\in \mathbb{R}^l} &\ \mathcal{B}z^m\\
     s.t. &\ \sum\limits_{i=1}^lz_i^m = 1.
      \end{aligned}$$
So, the standard KKT conditions
implies that there exists $\mu\in \mathbb{R}$ such that $$\nabla(\mathcal{B}z^m)-\mu\nabla(\sum\limits_{i=1}^lz_i^m -1)|_{z=w}=m\mathcal{B}w^{m-1}-m\mu w^{[m-1]}=0,$$
and hence $$\mathcal{B}w^{m-1}=\mu w^{[m-1]}\mbox{ and }\mathcal{B}w^m=\mu\sum\limits_{i=1}^ly_i^m =\mu<0.$$
This implies that the negative real number $\mu$ would be an $H^{++}$-eigenvalue of a principal sub-tensor $\mathcal{B}$ of $\mathcal{A}$. By hypothesis, it's out of the question. So, $\mathcal{A}x^m\geq0$ for all $x\geq0$, as required.
\end{proof}

Using the same proof as that of Theorem \ref{th:32} with appropriate changes in the
inequalities. We can obtain the following conclusions about the strictly copositive tensor.

 \begin{thm} \label{th:33} Let $\mathcal{A}$ be a symmetric tensor of order $m$ and dimension $n$. Then $\mathcal{A}$ is strictly copositive if and only if every principal sub-tensor of $\mathcal{A}$ has no non-positive $H^{++}$-eigenvalue, i.e., every principal  sub-tensor of $\mathcal{A}$ has no eigenvector $v>0$ with
associated $H$-eigenvalue $\lambda\leq0$.
 \end{thm}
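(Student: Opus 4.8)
The plan is to mimic the proof of Theorem~\ref{th:32} verbatim, simply replacing every occurrence of a strict inequality involving the eigenvalue by the corresponding weak one (``$\lambda<0$'' becomes ``$\lambda\le 0$'') and every weak inequality on $\mathcal{A}x^m$ by the corresponding strict one. Concretely, for the ``only if'' direction I would assume $\mathcal{A}$ is strictly copositive and suppose, for contradiction, that some $m$-order $l$-dimensional principal sub-tensor $\mathcal{B}$ of $\mathcal{A}$ has a non-positive $H^{++}$-eigenvalue $\lambda\le 0$ with eigenvector $v\in\mathbb{R}^l_{++}$. Contracting $\mathcal{B}v^{m-1}=\lambda v^{[m-1]}$ with $v$ gives $\mathcal{B}v^m=\lambda\sum_{i=1}^l v_i^m\le 0$; padding $v$ with zeros to $x_0=(v_1,\dots,v_l,0,\dots,0)^T\in\mathbb{R}^n_+\setminus\{0\}$ yields $\mathcal{A}x_0^m=\mathcal{B}v^m\le 0$, contradicting strict copositivity. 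Hence no principal sub-tensor of $\mathcal{A}$ can have a non-positive $H^{++}$-eigenvalue.

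For the converse I would first observe that the hypothesis in particular rules out \emph{negative} $H^{++}$-eigenvalues of every principal sub-tensor, so Theorem~\ref{th:32} already gives that $\mathcal{A}$ is copositive. Now suppose $\mathcal{A}$ is not strictly copositive. By Proposition~\ref{pr:41}(iii) there is $x_0\in\mathbb{R}^n_+\setminus\{0\}$ with $\mathcal{A}x_0^m=0$; scaling $x_0$ by $\big(\sum_i |x_{0,i}|^m\big)^{1/m}$ places a zero of $\mathcal{A}x^m$ on the compact set $K=\{x\in\mathbb{R}^n_+:\sum_{i=1}^n|x_i|^m=1\}$, and since $\mathcal{A}$ is copositive this zero is the minimum of $\mathcal{A}x^m$ over $K$. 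Let $y\in K$ attain it; then $y\neq 0$, and after reordering I may assume $y_i>0$ for $i\le l$ and $y_i=0$ for $i>l$, with $1\le l\le n$. Let $\mathcal{B}$ be the principal sub-tensor on the first $l$ coordinates and $w=(y_1,\dots,y_l)^T>0$, so that $\mathcal{B}w^m=\mathcal{A}y^m=0$ and $\sum_{i=1}^l w_i^m=1$. Embedding any $z\in\mathbb{R}^l$ with $\sum_{i=1}^l|z_i|^m=1$ back into $K$ shows $\mathcal{B}z^m=\mathcal{A}x^m\ge \mathcal{A}y^m=\mathcal{B}w^m$, so $w$ minimizes $\mathcal{B}z^m$ subject to $\sum_{i=1}^l|z_i|^m=1$. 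Because $w>0$, near $w$ this constraint is the smooth equation $\sum_{i=1}^l z_i^m=1$ with nonvanishing gradient $m\,w^{[m-1]}$, so the KKT conditions apply and give $\mu\in\mathbb{R}$ with $\mathcal{B}w^{m-1}=\mu\,w^{[m-1]}$; contracting with $w$ forces $\mu=\mathcal{B}w^m=0$. Thus $\mu=0$ is a non-positive $H^{++}$-eigenvalue of the principal sub-tensor $\mathcal{B}$, contradicting the hypothesis. Therefore $\mathcal{A}$ is strictly copositive.

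The only genuinely delicate points, exactly as in Theorem~\ref{th:32}, are (a) producing a minimizer of $\mathcal{A}x^m$ on $K$ that lies in the relative interior of some coordinate face, so that the associated principal sub-tensor is the right one and the reduced optimal vector $w$ is strictly positive, and (b) justifying the passage to the KKT stationarity condition, i.e.\ that the active constraint $\sum|z_i|^m=1$ may be rewritten as the smooth $\sum z_i^m=1$ near $w$ and that a constraint qualification (LICQ) holds there. I expect (b) to be the step most in need of care, but it follows immediately from $w>0$; everything else is routine relative to Theorem~\ref{th:32}, the only substantive change being that here the minimum value is $0$ rather than strictly negative, which is precisely what turns the conclusion from ``no negative $H^{++}$-eigenvalue'' into ``no non-positive $H^{++}$-eigenvalue''.
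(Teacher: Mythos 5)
Your proposal is correct and follows essentially the same route as the paper, which establishes this theorem by repeating the proof of Theorem \ref{th:32} with the inequalities adjusted. The only cosmetic difference is that in the converse you detour through Theorem \ref{th:32} and Proposition \ref{pr:41}(iii) to pin the minimum of $\mathcal{A}x^m$ over $K$ at exactly $0$ and conclude $\mu=0$, whereas the paper's template works directly with a minimizer of value $\le 0$ and concludes $\mu\le 0$; both give the same contradiction, and your explicit attention to the local smoothness of the constraint near $w>0$ is if anything slightly more careful than the original.
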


 \begin{thm} \label{th:34} Let $\mathcal{A}$ be a symmetric tensor of order $m$ and dimension $n$. Then $\mathcal{A}$ is strictly copositive if and only if every principal sub-tensor of $\mathcal{A}$ has no non-positive $Z^{++}-$eigenvalue, i.e., every principal  sub-tensor of $\mathcal{A}$ has no eigenvector $v>0$ with
associated $Z$-eigenvalue $\lambda\leq0$.
 \end{thm}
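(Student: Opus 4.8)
The plan is to follow the template of the proof of Theorem \ref{th:32}, replacing the normalization $\sum_{i=1}^n|x_i|^m=1$ by the Euclidean sphere constraint $x^Tx=1$ throughout, since (by \eqref{eq:23}) $Z$-eigenvalues are exactly the Lagrange multipliers attached to the latter constraint. For the ``only if'' direction I would argue by contradiction: suppose $\mathcal{A}$ is strictly copositive but some $m$-order $l$-dimensional principal sub-tensor $\mathcal{B}$ has a non-positive $Z^{++}$-eigenvalue $\lambda\le0$, with $Z$-eigenvector $v\in\mathbb{R}^l_{++}$ and $v^Tv=1$. Then $\mathcal{B}v^{m-1}=\lambda v$, so $\mathcal{B}v^m=\lambda\,v^Tv=\lambda\le0$. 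Padding $v$ with zeros to $x_0=(v_1,\dots,v_l,0,\dots,0)^T\in\mathbb{R}^n_+\setminus\{0\}$ and using the definition of principal sub-tensor gives $\mathcal{A}x_0^m=\mathcal{B}v^m=\lambda\le0$, contradicting strict copositivity. Hence no principal sub-tensor of $\mathcal{A}$ can have a non-positive $Z^{++}$-eigenvalue.

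For the ``if'' direction, suppose every principal sub-tensor of $\mathcal{A}$ has no non-positive $Z^{++}$-eigenvalue and, for contradiction, that $\mathcal{A}$ is not strictly copositive. By Proposition \ref{pr:41}(ii), applied with $\|\cdot\|$ the Euclidean norm, there is some $y_0\in\mathbb{R}^n_+$ with $y_0^Ty_0=1$ and $\mathcal{A}y_0^m\le0$. The continuous function $f(x)=\mathcal{A}x^m$ attains its minimum over the compact set $K=\{x\in\mathbb{R}^n_+:x^Tx=1\}$ at some $y\in K$, and $f(y)\le\mathcal{A}y_0^m\le0$. After a permutation write $y=(y_1,\dots,y_l,0,\dots,0)^T$ with $y_i>0$ for $1\le i\le l$, let $\mathcal{B}$ be the corresponding $l$-dimensional principal sub-tensor, and set $w=(y_1,\dots,y_l)^T\in\mathbb{R}^l_{++}$, so that $w^Tw=1$ and $\mathcal{B}w^m=f(y)\le0$. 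Since extending any $z\in\mathbb{R}^l$ with $z^Tz=1$ by zeros produces a point of $K$, minimality of $y$ shows $w$ is a local minimizer of $\min\{\mathcal{B}z^m:z\in\mathbb{R}^l,\ z^Tz=1\}$.

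Because $w>0$ the constraint gradient $2w$ is nonzero, so the standard KKT conditions yield $\mu\in\mathbb{R}$ with $m\mathcal{B}w^{m-1}-2\mu w=0$; setting $\tilde\mu=2\mu/m$ gives $\mathcal{B}w^{m-1}=\tilde\mu w$ and $\mathcal{B}w^m=\tilde\mu\,w^Tw=\tilde\mu\le0$. Thus $\tilde\mu$ is a non-positive $Z^{++}$-eigenvalue of the principal sub-tensor $\mathcal{B}$, contradicting the hypothesis; therefore $\mathcal{A}$ is strictly copositive, as required. The only step demanding real care — the ``main obstacle'' — is the bookkeeping in the KKT computation: one must use the gradient of $x^Tx$ (not of $\sum x_i^m$) in the stationarity condition, and check that absorbing the scalar $2/m$ into the multiplier reproduces exactly the defining system \eqref{eq:23} for $Z$-eigenvalues together with the identity $\mathcal{B}w^m=\tilde\mu$ coming from $w^Tw=1$. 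Everything else (compactness of $K$, the homogeneity reduction to the sphere, and padding with zeros to pass between $\mathcal{B}$ and $\mathcal{A}$) is identical to the argument given for Theorem \ref{th:32}.
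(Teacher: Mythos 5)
Your proposal is correct and follows essentially the same route as the paper's own proof: the same padding-with-zeros contradiction for the ``only if'' direction, and for the converse the same minimization of $\mathcal{A}x^m$ over the set $\{x\in\mathbb{R}^n_+:x^Tx=1\}$ followed by the KKT/Lagrange computation producing a non-positive $Z^{++}$-eigenvalue $2\mu/m$ of the relevant principal sub-tensor. The bookkeeping point you flag (using the gradient of $x^Tx$ and absorbing the factor $2/m$ into the multiplier) is exactly how the paper handles it.
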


 \begin{proof} Let $\mathcal{A}x^m>0$ for all $x\geq0$ with $x\neq0$. Suppose there exists an $m$-order $k$-dimensional principal sub-tensor $\mathcal{B}$ of $\mathcal{A}$ with an $Z^{++}$-eigenvalue $\lambda\leq0$ $(1\leq l\leq n)$, i.e. there is a positive vector $v\in\mathbb{R}^l_{++}$ such that
 $$\begin{cases}
 \mathcal{B}v^{m-1}=\lambda v\\
 v^Tv=1
 \end{cases}$$
We may write
$v=(v_1,v_2,\cdots,v_l)^T$ $(v_i>0$ for $i=1,\cdots,l)$. Let $x_0=(v_1,v_2,\cdots,v_l,0,\cdots,0)^T$. Then  we have $$\mathcal{A}x_0^m=\mathcal{B}v^m=\lambda \sum_{i=1}^lv_i^2=\lambda\leq 0.$$ This contradicts the hypothesis. So, no principal sub-tensor of $\mathcal{A}$ can have a negative or zero $Z^{++}$-eigenvalue.

Conversely, let each principal sub-tensor of $\mathcal{A}$ have no non-positive $Z^{++}$-eigenvalue. Suppose that $\mathcal{A}$ is not strictly copositive, then at least there exists a $y_0\geq0$ with $y_0\neq0$ such that $\mathcal{A}y_0^m\leq 0$.
Since the function $f(x)=Ax^m$ is continuous and the set $S=\{x \in \mathbb{R}^n_+;\sum\limits_{i=1}^nx_i^2 = 1\}$ is a
compact subset of $\mathbb{R}^n$, there is $y\in S$ such that
\begin{equation} \label{eq:33}f(y)=\mathcal{A}y^m=\min_{x\in S} \mathcal{A}x^m.
\end{equation}
Obviously, we must obtain that $f(y)=Ay^m\leq\mathcal{A}y_0^m\leq 0$.
Since $y_0\geq0$ with $y_0\neq0$, we may assume that $y=(y_1,y_2,\cdots,y_l,0,\cdots,0)^T$ $(y_i>0$ for $i=1,\cdots,l, 1\leq l\leq n)$. Let $w=(y_1,y_2,\cdots,y_l)^T$ and let $\mathcal{B}$ be a principal sub-tensor that obtained from $\mathcal{A}$ by the polynomial $\mathcal{A}x^m$ for $x=(x_1,x_2,\cdots,x_l,0,\cdots,0)^T$. Then \begin{equation} \label{eq:34}
w\in\mathbb{R}^l_{++},\  \sum\limits_{i=1}^ly_i^2 = 1\mbox{ and }f(y)=\mathcal{A}y^m=\mathcal{B}w^m\leq 0.
\end{equation}

Let $x=(z_1,z_2,\cdots,z_l,0,\cdots,0)^T\in\mathbb{R}^n$ for all $z=(z_1,z_2,\cdots,z_l)^T\in\mathbb{R}^l$ with $\sum\limits_{i=1}^lz_i^2 = 1$. Clearly, $x\in S$, and hence, by (\ref{eq:33}) and (\ref{eq:34}), we have $$f(x)=\mathcal{A}x^m=\mathcal{B}z^m\geq f(y)=\mathcal{A}y^m=\mathcal{B}w^m.$$ Since $w\in\mathbb{R}^l_{++}$,
$w$ is a local minimizer of the following optimization problem
$$ \begin{aligned}
    \min_{z\in \mathbb{R}^l} &\ \mathcal{B}z^m\\
     s.t. &\ \sum\limits_{i=1}^lz_i^2 = 1.
      \end{aligned}$$
So, the standard KKT conditions
implies that there exists $\mu\in \mathbb{R}$ such that
$\nabla(\mathcal{B}z^m-\mu(\sum\limits_{i=1}^lz_i^2 -1))|_{z=w}= m\mathcal{B}w^{m-1}-2\mu w =0,$
and hence $$\mathcal{B}w^{m-1}=\frac{2\mu}m w\mbox{ and }\mathcal{B}w^m=\frac{2\mu}m\sum\limits_{i=1}^ly_i^2 =\frac{2\mu}m\leq0.$$
This implies that the non-positive real number $\frac{2\mu}m$ would be an $Z^{++}$-eigenvalue of a principal sub-tensor $\mathcal{B}$ of $\mathcal{A}$. By hypothesis, it's quite impossible. So, $\mathcal{A}x^m>0$ for all $x\geq0$ with $x\neq0$, as required.
\end{proof}

Using the same proof as that of Theorem \ref{th:34} with appropriate changes in the
inequalities. We can obtain the following conclusions about the copositive tensor.

 \begin{thm} \label{th:35} Let $\mathcal{A}$ be a symmetric tensor of order $m$ and dimension $n$. Then $\mathcal{A}$ is copositive if and only if every principal sub-tensor of $\mathcal{A}$ has no  negative $Z^{++}$-eigenvalue, i.e., every principal  sub-tensor of $\mathcal{A}$ has no eigenvector $v>0$ with
associated $Z$-eigenvalue $\lambda<0$.
 \end{thm}

 Clearly, we have the following conclusions as a corollary of the above theorems.


  \begin{cor}\label{co:37} Let $\mathcal{A}$ be a symmetric tensor of order $m$ and dimension $n$. Then \begin{itemize}
\item[(i)] $\mathcal{A}$ is copositive if and only if  for every principal sub-tensor $\mathcal{B}$ of $\mathcal{A}$, the fact that $\lambda$ is $H^{++}$ (or $Z^{++}$)-eigenvalue of $\mathcal{B}$  means that  $\lambda\geq0$;
\item[(ii)] $\mathcal{A}$ is strictly copositive if and only if for every principal sub-tensor $\mathcal{B}$ of $\mathcal{A}$, the fact that $\lambda$ is $H^{++}$ (or $Z^{++}$)-eigenvalue of $\mathcal{B}$  means that  $\lambda>0$.\end{itemize}
 \end{cor}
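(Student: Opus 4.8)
The plan is to observe that Corollary~\ref{co:37} contains no new mathematical content beyond Theorems~\ref{th:32}--\ref{th:35}: each item is simply the contrapositive (i.e.\ negated-existential $\leftrightarrow$ universal) reformulation of one of those theorems, so the proof consists entirely of spelling out the logical equivalences. I would present it as a short argument rather than a computation.

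For part~(i) I would proceed as follows. By Theorem~\ref{th:32}, $\mathcal{A}$ is copositive if and only if no principal sub-tensor of $\mathcal{A}$ has a negative $H^{++}$-eigenvalue. Now ``no principal sub-tensor of $\mathcal{A}$ has a negative $H^{++}$-eigenvalue'' is precisely the negation of ``there exist a principal sub-tensor $\mathcal{B}$ of $\mathcal{A}$ and an $H^{++}$-eigenvalue $\lambda$ of $\mathcal{B}$ with $\lambda<0$,'' which in turn is exactly the assertion ``for every principal sub-tensor $\mathcal{B}$ of $\mathcal{A}$, whenever $\lambda$ is an $H^{++}$-eigenvalue of $\mathcal{B}$ we have $\lambda\geq0$.'' This yields the $H^{++}$ half of~(i); applying the identical reasoning to Theorem~\ref{th:35} in place of Theorem~\ref{th:32} yields the $Z^{++}$ half. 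Hence~(i) holds.

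For part~(ii) I would repeat the same argument verbatim, replacing ``negative'' by ``non-positive'' and the conclusion ``$\lambda\geq0$'' by ``$\lambda>0$'': the negation of ``some $H^{++}$-eigenvalue $\lambda$ of some principal sub-tensor satisfies $\lambda\leq0$'' is ``every $H^{++}$-eigenvalue of every principal sub-tensor satisfies $\lambda>0$.'' Invoking Theorem~\ref{th:33} gives the $H^{++}$ statement and Theorem~\ref{th:34} gives the $Z^{++}$ statement, completing~(ii).

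Since every step is just a translation between an existential statement and its negated universal form, there is no genuine obstacle here; the only point deserving (minimal) attention is to confirm that the phrase ``for every principal sub-tensor $\mathcal{B}$ of $\mathcal{A}$'' in the corollary corresponds exactly to the ``every principal sub-tensor of $\mathcal{A}$'' quantifier appearing in each of the four cited theorems — which it does — so that no eigenvalue of a sub-tensor is inadvertently excluded or double-counted.
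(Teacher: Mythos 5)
Your proposal is correct and matches the paper's intent exactly: the paper offers no proof, merely noting that the corollary follows ``clearly'' from Theorems \ref{th:32}--\ref{th:35}, and your argument simply makes explicit the routine translation between ``has no (negative/non-positive) $H^{++}$- or $Z^{++}$-eigenvalue'' and the universally quantified sign condition on all such eigenvalues.
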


\begin{cor} \label{co:38} Let $\mathcal{A}$ be a symmetric tensor of order $m$ and dimension $n$.
\begin{itemize}
\item[(i)] If $\mathcal{A}$ is copositive, then $a_{ii\cdots i}\geq0$ for all $i=1,2,\cdots,n$.
\item[(ii)] If $\mathcal{A}$ is strictly copositive, then $a_{ii\cdots i}>0$ for all $i=1,2,\cdots,n$.
\item[(iii)] Let $a_{ii\cdots i}=0$ for $i\in\{1,2,\cdots,n\}$. If $\mathcal{A}$ is copositive, then $a_{ii\cdots ij}\geq0$ for all $j=1,2,\cdots,n$. 
\end{itemize} \end{cor}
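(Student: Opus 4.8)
The plan is to handle (i) and (ii) by a one-line evaluation of $\mathcal{A}$ on the standard basis vectors, and to handle (iii) by a one-parameter perturbation along a coordinate segment, reading off the sign of the desired entry from the linear term of a polynomial in the parameter. First I would note that for each $i$, by the definition $(\ref{eq:21})$ of $\mathcal{A}x^m$, every summand except the one with all indices equal to $i$ vanishes when $x=e^{(i)}$, so $\mathcal{A}(e^{(i)})^m=a_{ii\cdots i}$. Since $e^{(i)}\in\mathbb{R}^n_+\setminus\{0\}$, copositivity of $\mathcal{A}$ gives $a_{ii\cdots i}=\mathcal{A}(e^{(i)})^m\geq0$, which is (i), and strict copositivity gives the strict inequality, which is (ii). (One could alternatively observe that $a_{ii\cdots i}$ is the unique $H^{++}$-eigenvalue of the $1$-dimensional principal sub-tensor supported on the $i$-th coordinate and cite Corollary \ref{co:37}, but the direct computation is shorter.)

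For (iii), fix $i$ with $a_{ii\cdots i}=0$ and fix $j\in\{1,2,\cdots,n\}$. If $j=i$ there is nothing to prove, since $a_{ii\cdots ij}=a_{ii\cdots i}=0$. If $j\neq i$, I would set $x(t)=e^{(i)}+t\,e^{(j)}$ for $t\geq0$, so that $x(t)\in\mathbb{R}^n_+$. Using symmetry of $\mathcal{A}$ and expanding $(\ref{eq:21})$, only indices taking the value $i$ or $j$ contribute, and collecting terms according to the number $k$ of indices equal to $j$ gives
\[
\mathcal{A}x(t)^m=\sum_{k=0}^m\binom{m}{k}\,a_{\underbrace{i\cdots i}_{m-k}\underbrace{j\cdots j}_{k}}\,t^k ,
\]
where $a_{\underbrace{i\cdots i}_{m-k}\underbrace{j\cdots j}_{k}}$ is well defined because $\mathcal{A}$ is symmetric. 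The $k=0$ term is $a_{ii\cdots i}=0$ by hypothesis, so for $t>0$ we may divide by $t$ to obtain $\mathcal{A}x(t)^m/t=m\,a_{ii\cdots ij}+\sum_{k=2}^m\binom{m}{k}a_{\underbrace{i\cdots i}_{m-k}\underbrace{j\cdots j}_{k}}t^{k-1}$, where $a_{ii\cdots ij}$ denotes the entry with $m-1$ indices equal to $i$ and one equal to $j$.

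Copositivity forces $\mathcal{A}x(t)^m\geq0$ for all $t\geq0$, hence $\mathcal{A}x(t)^m/t\geq0$ for all $t>0$; letting $t\to0^+$ yields $m\,a_{ii\cdots ij}\geq0$, that is, $a_{ii\cdots ij}\geq0$, as required. None of the steps is genuinely hard; the only point requiring care is the bookkeeping of the binomial expansion and the convention that $a_{ii\cdots ij}$ refers to the entry with exactly one index different from $i$ — a convention that makes sense precisely because $\mathcal{A}$ is symmetric, so that all $m$ cyclic-or-otherwise arrangements of $(i,\ldots,i,j)$ share the same value and can legitimately be lumped into the coefficient $m\,a_{ii\cdots ij}$.
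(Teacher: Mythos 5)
Your proof is correct and, in substance, the same as the paper's: for (iii) you both restrict to the two-dimensional coordinate slice through $e^{(i)}$ and $e^{(j)}$, expand the resulting binomial in a parameter $t$, kill the top diagonal term using $a_{ii\cdots i}=0$, and isolate the coefficient $m\,a_{ii\cdots ij}$ by a limit (you send $t\to0^{+}$ after dividing by $t$, the paper sends $t\to\infty$ after dividing by $mt^{m-1}$ --- equivalent under $t\mapsto 1/t$ and homogeneity). The only other cosmetic difference is in (i)--(ii), where you evaluate $\mathcal{A}(e^{(i)})^{m}=a_{ii\cdots i}$ directly instead of invoking Theorems \ref{th:32}--\ref{th:35} on the $1$-dimensional principal sub-tensors as the paper does; both routes are valid and you already note the paper's alternative.
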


\begin{proof} (i) For each fixed $i$, $\mathcal{B}_i=(a_{ii\cdots i})$ is an $m$-order $1$-dimensional principal sub-tensor $\mathcal{B}$ of $\mathcal{A}$. Clearly, $a_{ii\cdots i}$ is the unique eigenvalue with eigenvector $(1)$. It follows from Theorem \ref{th:32} (or \ref{th:35}) that $\mathcal{B}_i$ has no a negative $H^{++}$ ( or $Z^{++}$)-eigenvalue.  Thus $a_{ii\cdots i}\geq0$ for each $i\in\{1,2,\cdots,n\}$.

Similarly, from Theorem \ref{th:33} (or \ref{th:34}), (ii)  is easily obtained.

(iii) Let $\mathcal{B}_{ij}$ be a $m$-order $2$-dimensional principal sub-tensor of $\mathcal{A}$. Then by the definition of the principal sub-tensor, we have $$\mathcal{B}_{ij}=(a_{i_1i_2\cdots i_k}),\ \ \ i_1,i_2,\cdots, i_k=i,j.$$ 
Let $x=(t,1)^T$ and $y=(t,1,0,0,\cdots,0)^T$ for $t>0$. Then
$$\begin{aligned}
0\leq\mathcal{A}y^m=\mathcal{B}_{ij}x^m=&t^ma_{ii\cdots i}+mt^{m-1}a_{ii\cdots ij}+\cdots+a_{jj\cdots jj}\\
=&mt^{m-1}a_{ii\cdots ij}+\left(\begin{aligned}2\\m\end{aligned}\right)t^{m-2}a_{ii\cdots ijj}+\cdots+a_{jj\cdots jj}.
\end{aligned}$$
So we have
$$mt^{m-1}a_{ii\cdots ij}\geq -\left(\begin{aligned}2\\m\end{aligned}\right)t^{m-2}a_{ii\cdots ijj}-\cdots-a_{jj\cdots jj},$$
and hence
$$a_{ii\cdots ij}\geq -\left(\begin{aligned}2\\m\end{aligned}\right)\frac{a_{ii\cdots ijj}}{mt}-\cdots-\frac{a_{jj\cdots jj}}{mt^{m-1}}.$$
Let $t\to \infty$. We have $a_{ii\cdots ij}\geq0.$ \end{proof}


 It follows from the above results that the copositivity of tensors implies that all diagonal entry can not be negative, and the strict copositivity implies that all diagonal entry must be positive. So this reveals that in testing for (strict) copositivity of tensors, one can restrict attention to tensors with  (positive) non-negative diagonal entries.  We can further restrict attention to tensors whose diagonal entries are all equal to $1$; we say that such a tensor has unit diagonal.

 \begin{ex} \label{ex:1} Let $\mathcal{A}$ be a symmetric tensor of order $3$ and dimension $2$.  If $a_{111}=a_{222}=0$, then it follows from Corollary \ref{co:38} (iii) that $a_{iij}\geq0$ for $i,j=1,2$ whenever $\mathcal{A}$ is copositive. So $\mathcal{A}$ is copositive if and only if $\mathcal{A}$ is non-negative.

 If $\mathcal{A}$ is copositive   and   $a_{111}=a_{222}=1$, then $a_{111}+a_{222}+3a_{112}+3a_{221}\geq0$, and hence  $$ a_{112}+a_{221}\geq-\frac23.$$
\end{ex}

 \begin{ex} \label{ex:2} Let $\mathcal{A}$ be a copositive and symmetric tensor of order $3$ and dimension $3$.  If $a_{111}=a_{222}=a_{333}=0$, then $$a_{112}\geq0,a_{113}\geq0, a_{221}\geq0, a_{331}\geq0, a_{332}\geq0, a_{223}\geq0$$ and  $$2a_{123}+a_{112}+a_{113}+a_{221}+a_{223}+a_{331}+a_{332}\geq0.$$ If   $a_{111}=a_{222}=a_{333}=1$, then consider an $3$-order $2$-dimensional principal sub-tensor of $\mathcal{A}$, we have $$ a_{112}+a_{221}\geq-\frac23, a_{113}+a_{331}\geq-\frac23, a_{332}+a_{223}\geq-\frac23,$$ and  $$2a_{123}+a_{112}+a_{113}+a_{221}+a_{223}+a_{331}+a_{332}\geq-1.$$
\end{ex}

\section*{Acknowledgments}
 The authors would like to express
their sincere thanks to Dr. Lek-Heng Lim, the Handling Editor and
the anonymous referees for their constructive comments and valuable
suggestions.
\bibliographystyle{amsplain}

\end{document}